\documentclass[12pt]{article}
\usepackage{amssymb,amsmath,makeidx,amscd,verbatim,amsthm,graphicx,mathrsfs}
\usepackage{lscape}
\usepackage{bbm}
\usepackage{color}
\usepackage{hyperref}
\usepackage[english]{babel}
\addto{\captionsenglish}{%

}
\sloppy
\setlength{\textwidth}{10cm}
\setlength{\textheight}{23cm}
\linespread{1.5}
\newtheorem{thm}{Theorem}[section]

\theoremstyle{definition}
\newtheorem{defx}[thm]{Definition}

\numberwithin{equation}{section}

\newcommand{\be}{\begin{enumerate}}
\newcommand{\ee}{\end{enumerate}}
\newcommand{\bq}{\begin{eqnarray*}}
\newcommand{\eq}{\end{eqnarray*}}

\setlength{\textwidth}{16cm}
\setlength{\textheight}{20cm}
\setlength{\voffset}{-2.5cm}
\setlength{\hoffset}{-2cm}
\linespread{.7}
\begin{document}
\pagenumbering{arabic} \baselineskip 10pt
\newcommand{\disp}{\displaystyle}
\renewcommand*\contentsname{Table of Contents}
\thispagestyle{empty}
\newcommand{\HRule}{\rule{\linewidth}{0.1mm}}
\linespread{1.0}
\pagenumbering{arabic} \baselineskip 12pt
\thispagestyle{empty}
\title{\textbf{Equivalence of Quaternionic Heisenberg Homogeneous Quasi-norms}}
\author{O. A. Ariyo$^1$  and  M. E. Egwe$^{2*}$\\ Department of Mathematics, University of Ibadan, Ibadan, Nigeria.\\
$^1$\emph{oa.ariyo@ui.edu.ng}\;\;$^2$\emph{murphy.egwe@ui.edu.ng}}
\maketitle
\large	
\begin{abstract}
Let $\mathbb{H}_q$ denote the quaternionic Heisenberg group of dimension $(4n+3)$ with $\mathbb{R}^4\times\mathbb{R}^3$ stratification. We identify certain homogeneous norms on the group and show that any two quasi-norms on $\mathbb{H}_q$ are equivalent for $n<\infty$.
\end{abstract}

\small \textbf{Keywords:} Quaternionic Heisenberg Quasi-norms; Homogeneous Norm; Equivalence of Norms.

\section{Introduction}
Lie groups of $H$-type are generalization of the classical Heisenberg group. The Quaternionic Heisenberg group $\mathbb{H}_q$ is an example of a $H$-type group as introduced by Kaplan \cite{kaplan2006harmonic}. The group plays core roles in abstract harmonic analysis, the representation theory, analysis of several complex variables, the partial differential equations and quantum mechanics like its Heisenberg counterpart. It is a stratified Lie group with the underlying manifold structure $\mathbb{H}_q=\mathbb{H}\oplus \mathbb{R}^3\approx \mathbb{R}^4\times \mathbb{R}^3$, where $\mathbb{H}$ is the group of quaternions and isomorphic to $\mathbb{R}^4$. The multiplication is given by \[(u,v)(r,s)=(u+r, v+s+2\Im(r\cdot \bar{u}))~~~\hbox{where}~r\cdot \bar{u} =\sum_{j=1}^n r_j\overline{u_j}\]  $\forall~~u,r\in\mathbb{R}^4$ and $v,s\in\mathbb{R}^3$.\\

The centre of quaternionic Heisenberg group $\mathbb{H}_q$ is $\mathbb{R}^3=[\mathbb{H}_q,\mathbb{H}_q]$, and the bi-invariant Haar measure on $\mathbb{H}_q$ is the Lebesgue measure $dg:=dudt, ~\hbox{for}~u\in\mathbb{R}^4 ~\hbox{and} ~t\in\mathbb{R}^3$. Let $K$ be a complex compact subgroup of automorphism of $\mathbb{H}_q$, we define a motion group of semi-direct product of $\mathbb{H}_q$ and $K$ by $G:= \mathbb{H}_q\ltimes K$ with the usual product $\disp (k,x,t)\cdot(k^\prime, x^\prime, t^\prime)=[k\cdot k^\prime, (x,t)(k\cdot x^\prime,t)]$. The Haar measure on this motion group $G$ is $dudtdk$ where $dk$ is the Haar measure of $K$.\\
The Kohn-Laplacian operator is defined by $\Delta_{\mathbb{H}\times \mathbb{R}^3}=X_0^2+X_1^2+X_2^2+X_3^2$ \cite{faress2020spherical} and the hypoelliptic Sub-Laplacian is given by $\disp{\mathfrak{L}=-\frac{1}{4}\sum_{1\le j\le n,~~0\le k\le 3} \left(X_j^k\right)^2}$ such that
\begin{equation*}
\begin{split}
	-\Delta=&-\big(\frac{\partial^2}{\partial x_0^2}+\frac{\partial^2}{\partial x_1^2}+\frac{\partial^2}{\partial x_2^2}+\frac{\partial^2}{\partial x_3^2}\big)+4|x|^2\big(\frac{\partial^2}{\partial t_1^2}+\frac{\partial^2}{\partial t_2^2}+\frac{\partial^2}{\partial t_3^2}\big)\\
	&+\big(-x_1\frac{\partial}{\partial x_0}+x_0\frac{\partial}{\partial x_1}+x_3\frac{\partial}{\partial x_2}-x_2\frac{\partial}{\partial x_3}\big)T_1\\&+ \big(-x_2\frac{\partial}{\partial x_0}-x_3\frac{\partial}{\partial x_1}+x_0\frac{\partial}{\partial x_2}+x_1\frac{\partial}{\partial x_3}\big)T_2\\
	&+\big(-x_3\frac{\partial}{\partial x_0}+x_2\frac{\partial}{\partial x_1}-x_1\frac{\partial}{\partial x_2}+x_0\frac{\partial}{\partial x_3}\big)T_3,
	\end{split}
\end{equation*} see \cite{christ2016sharp}.\\
The basis for the Lie algebra of $\mathbb{H}_q$ is given by the horizontal left-invariant vector fields $X_0, X_1, X_2, X_3, T_1, T_2, T_3$ where \begin{equation*}
\begin{split}
X_0&=\frac{\partial}{\partial x_0}-2x_1T_1-2x_2T_2-2x_3T_3\\
X_1&=\frac{\partial}{\partial x_1}+2x_0T_1-2x_3T_2+2x_2T_3\\
X_2&=\frac{\partial}{\partial x_2}+2x_3T_1+2x_0T_2-2x_1T_3\\
X_3&=\frac{\partial}{\partial x_3}-2x_2T_1+2x_1T_2+2x_0T_3.
\end{split}
\end{equation*}
and
$\disp{T_1=\frac{\partial}{\partial t_1},	T_2=\frac{\partial}{\partial t_2}}$ and $\disp{T_3=\frac{\partial}{\partial t_3}}$\\
The Lie bracket defined on these vectors fields satisfies the following non-trivial commutation relations:
$$[X_0,X_1]=[X_3,X_2]=4T_1,~[X_0,X_2]=[X_1,X_3]=4T_2,~[X_0,X_3]=[X_2,X_1]=4T_3.$$

The group $\{\delta_\rho: 0<r<\infty\}$ of dilations defined on $\mathbb{H}_q$ is expressed as $\delta_\rho(u,t)=(\sqrt{\rho}u, \rho t)$ for every element $(u,t)\in \mathbb{H}_q$.
\section{The Homogeneous quasi-norms and Equivalence}
The quasi-norms on Quaternionic Heisenberg group are homogeneous norms and are compatible with the group's stratification. Moreover, these norms respect the non-Euclidean geometry and dilations on the group as well as the Carnot-Carath\'eodory metric resulting from the horizontal vector fields.
\begin{defx}
A quasi-norm on the quaternionic Heisenberg group is a function
\begin{equation}\label{2.1}
|\cdot|_{\mathbb{H}_q} : \mathbb{H}_q \longrightarrow [0,\infty)
\end{equation}
satisfying;
\begin{enumerate}
\item[(i)] $\disp{|\delta_\rho \nu|_{\mathbb{H}_q} = \rho^Q|\nu|_{\mathbb{H}_q}}$, $\rho>0$; where $Q$ is the degree of homogeneity
\item[(ii)] $\disp{|\nu|_{\mathbb{H}_q}\ge 0}$ and $\disp{|\nu|_{\mathbb{H}_q} =0\iff \nu=0}$ ~~(non-negativity)
\item[(iii)] $\disp{|\nu^{-1}|_{\mathbb{H}_q}=|\nu|_{\mathbb{H}_q}}$
\item[(iv)] $\disp{|\nu_1\nu_2|_{\mathbb{H}_q}\le K\left(|\nu_1|_{\mathbb{H}_q}+|\nu_2|_{\mathbb{H}_q}\right)},~K\ge 1$~~(quasi-triangle inequality)
\end{enumerate}
for all $\nu:=(u,t)\in \mathbb{H}_q$.
\end{defx}
Note that in quasi-norms, the triangle inequality property of norms is replaced with
\begin{equation}\label{2.2}
\|x\cdot y\|\le K(\|x\|+\|y\|)~~~ \text{for~ some}~ K>1
\end{equation}
where norm is implied when $K=1$. So we shall call \eqref{2.2} the quasi-triangle inequality.

\begin{defx}
Let \eqref{2.1} be a homogeneous quasi-norm on $\mathbb{H}_q$ and $\delta_\rho$ a dilation on $\mathbb{H}_q$. The quasi-norm \eqref{2.1} is said to be dilation invariant if $$\|\delta_\rho(q,t)\|_{\mathbb{H}_q}=\rho\|(q,t)\|$$
\end{defx}
Any norm on the Quaternionic Heisenberg group is homogeneous and of degree $Q=4n+6$ with respect to the dilation of the group, i.e., $\disp{|\delta_\rho \nu|_{\mathbb{H}_q}=\rho^Q|\nu|_{\mathbb{H}_q}}$ for any $\nu \in \mathbb{H}_q$ \cite{christ2016sharp}\cite{egwe2013equivalence}.\\
The quaternionic quasi-norms include;
\begin{enumerate}
\item The Kor\'anyi or the gauge norm is defined by $$\|(q,t)\|_{\mathbb{H}_q}=\left(|q|^4+|t|^2\right)^{1/4}.$$
Note that $\disp{|q|^2=\sum_{i =1}^n |q_{i}|^2}$ and $\disp{|q_i|}$ defines the classical quaternionic norm; $|t|=\left(\disp{\sum_{i=1}^3 t_i^2}\right)^{\frac{1}{2}}$ is the usual Euclidean norm on $t\in \mathbb{R}^3$. This Kor\'anyi-type norm is a homogeneous norm of degree $1$ in close relation to the dilation of the Quaternionic Heisenberg group defined earlier. It is known that this norm is smooth away from the origin and satisfies the following conditions;
    \begin{enumerate}
    \item $\disp{|(q,t)^{-1}|=|(q,t)|}$
    \item $\disp{|(q,t)|=0 \implies q=0,~t=0}$
    \end{enumerate}
    This norm satisfies the quasi-triangle inequality being symmetric and sub-additive up to a multiplicative constant.\\
    To see this, (a) is trivial since $(q,t)^{-1}=(-q,-t)$ and for (b), we show that $\disp \|(q,t)\cdot(q^\prime, t^\prime)\|\le K\left(\|(q,t)\|+\|(q^\prime, t^\prime)\|\right);~~K\ge 1$\\
    Recall that the product of any two elements $\nu, \nu^\prime \in\mathbb{H}_q$ is given by $$\nu\cdot \nu^\prime=(q,t)\cdot(q^\prime, t^\prime)=(q+q^\prime, t+t+2\Im (q\cdot \bar{q^\prime})).$$
   Then
    \begin{eqnarray*}
    \|(q,t)\cdot(q^\prime, t^\prime)\|&=& \|(q+q^\prime), t+t^\prime+2\Im(q\cdot \bar{q^\prime})\|\\
    &=&\left(\|q+q^\prime\|^4+\|t+t^\prime +2\Im(q\cdot\bar{q^\prime})\|^2\right)^{1/4}\\
    \end{eqnarray*}
     Note that $\disp |q+q^\prime|^4\le 8(|q|^4+|q^\prime|^4)$ and
    \begin{eqnarray*} |t+t+2\Im(q\cdot \bar{q^\prime})|^2&\le& 2\left(|t+t^\prime|^2+16|\Im(q\cdot \bar{q^\prime})|^2\right)\\
    &\le& 2\left(|t|^2+|t^\prime|^2+16|q|^2|q^\prime|^2\right)
    \end{eqnarray*}
    Hence, we have
    \begin{eqnarray*}
    \|(q,t)\cdot(q^\prime, t^\prime)\|^4&\le&K^\prime\left(|q|^4+|q^\prime|^4+|t|^2+|t^\prime|^2\right)\\
    &\le& K\left(\|(q,t)\|+\|(q^\prime, t^\prime)\|\right)
    \end{eqnarray*}

    \item The Folland-Stein Gauge which is equivalent to the Kor\'anyi norm is given by $\disp{\|(q,t)\|=\left(|q|^2 +|t|\right)^{1/2}}$ and is most adopted in the study of Hardy and Sobolev-type spaces \cite{folland2020hardy}. It differs from the Kor\'anyi norm only by scaling.
    \item Homogeneous quasi-norm defined as $\disp{\|(q,t)\|_\alpha=\left(|q|^\alpha +|t|^{\alpha/2}\right)^{1/\alpha};~\alpha > 0}$ coincides with the kor\'anyi norm if $\alpha =4$.
    \item The Box norm $\disp{\|(q,t)\|=\sqrt{|q|^2+|t|^2}}$ is a Euclidean-type norm on the quaternionic heisenberg group. This norm is nonhomogeneous and under dilation and is usually employed in geometric embedding.
    \item The $max-type$ norm which is defined by $\disp{\|(q,t)\|_{max}=\max \left(|q|, |t|^{1/2}\right)}$.
    \item The Carnot-Carath\'eodory distance. It is a bi-Lipschitz sub-Riemannian norm which is comparable to the Kor\'anyi norm and is defined via the length of horizontal curves by $\disp{d\left((q,t), (q^\prime, t^\prime) \right):=\inf \bigg{\{}\int_0^1|\dot \gamma(s)|~ds: \gamma (0)=(q,t), \gamma (1) =(q^\prime, t^\prime), ~\gamma ~\hbox{horizontal}\bigg{\}}}.$
\end{enumerate}
\begin{defx}
Any two quasi-norms $\|\cdot\|_u$ and $\|\cdot\|_v$ on Quaternionic Heisenberg group $\mathbb{H}_q:=\mathbb{H}\times\mathbb{R}^3$ are said to be equivalent if there exists constants $k_1, k_2>0$ such that $\disp{k_1\|(q,t)\|_u\le \|(q,t)\|_v \le k_2\|(q,t)\|_u,~~\forall~(q,t)\in\mathbb{H}_q.}$
\end{defx}
To prove equivalence of norms, for instance, the Kor\'anyi and the max-type quasi-norms are equivalent since we can find an upper and lower bounds as follows;\\
\begin{eqnarray*}\|(q,t)\|_{\mathbb{H}_q}=(|q|^4+|t|^4)^{1/4}&\le& \left(2 \max(|q|^4, |t|^2)\right)^{1/4}\\
&=&2^{1/4}\max\left(|q|, |t|^{1/2}\right)\\
&=& 2^{1/4}\|(q,t)\|_{\max}
\end{eqnarray*}
This defines the upper bound; and \\
\begin{eqnarray*}
\|(q,t)\|_{\mathbb{H}_q}=\left(|q|^4+|t|^2\right)^{1/4}&\ge& \left(\max (|q|^4, |t|^2)\right)^{1/4}\\
&=& \max (|q|, |t|^{1/2})\\
&=& \|(q,t)\|_{\max}
\end{eqnarray*}
defines the lower bound.
\\Hence, the equivalence is expressed as $\disp{\|(q,t)\|_{\max}\le\|(q,t)\|_K\le 2^{1/4}\|(q,t)\|_{\max}}.$
\begin{thm}
Let $|\nu|_{\mathbb{H}_{q_1}}$ and $|\nu|_{\mathbb{H}_{q_2}}$ be any two continuous homogeneous norms on $\mathbb{H}_q$ invariant under dilation. Then $|\nu|_{\mathbb{H}_{q_1}}$ and $|\nu|_{\mathbb{H}_{q_2}}$ are equivalent.
\end{thm}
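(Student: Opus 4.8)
The plan is to use the standard polar-coordinate/compactness argument for homogeneous groups, reducing the global two-sided estimate to the extremization of one norm over a single compact cross-section. Write $N_1 = |\cdot|_{\mathbb{H}_{q_1}}$ and $N_2 = |\cdot|_{\mathbb{H}_{q_2}}$ for brevity; by hypothesis each is continuous, vanishes only at the origin (property (ii) of Definition 2.1), and is dilation invariant in the sense $N_i(\delta_\rho \nu) = \rho N_i(\nu)$ of Definition 2.2. The crucial point is that dilation invariance lets me recover $N_2(\nu)$ from its value on the $N_1$-unit sphere multiplied by the scalar $N_1(\nu)$, so everything hinges on controlling $N_2$ on that sphere.

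First I would set up a polar decomposition. Let $S^E = \{\nu \in \mathbb{H}_q : |\nu|_{\mathrm{Eucl}} = 1\}$ denote the ordinary Euclidean unit sphere, which is compact. For a fixed nonzero $\sigma = (u,t)$ the map $\rho \mapsto |\delta_\rho \sigma|_{\mathrm{Eucl}}^2 = \rho|u|^2 + \rho^2|t|^2$ has derivative $|u|^2 + 2\rho|t|^2 > 0$, hence is strictly increasing from $0$ to $\infty$; consequently each nonzero $\nu$ meets exactly one dilation orbit through $S^E$, and $\Phi(\rho,\sigma) = \delta_\rho \sigma$ is a homeomorphism of $(0,\infty)\times S^E$ onto $\mathbb{H}_q\setminus\{0\}$. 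This is where the non-isotropic nature of $\delta_\rho(u,t) = (\sqrt{\rho}\,u,\rho t)$ must be treated with some care, but the monotonicity above handles it cleanly.

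Next I would prove that the $N_1$-unit sphere $S_1 = \{\omega : N_1(\omega) = 1\}$ is compact. Using homogeneity, $N_1(\delta_\rho\sigma) = \rho N_1(\sigma)$, so for $\sigma \in S^E$ the orbit crosses $S_1$ precisely at $\rho = 1/N_1(\sigma)$, giving $S_1 = \{\delta_{1/N_1(\sigma)}\,\sigma : \sigma \in S^E\}$. Since $N_1$ is continuous and strictly positive on the origin-free set $S^E$, the assignment $\sigma \mapsto \delta_{1/N_1(\sigma)}\,\sigma$ is continuous, so $S_1$ is the continuous image of the compact set $S^E$ and is therefore compact. I expect this compactness/properness step to be the main obstacle, as it is the only place where continuity and the vanishing-only-at-the-origin hypothesis are simultaneously essential; notably, the quasi-triangle inequality (property (iv)) is never used.

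Finally I would extremize the second norm. On the compact set $S_1$ the continuous function $N_2$ attains a minimum $m$ and a maximum $M$, and since $N_2$ vanishes only at the origin while $0 \notin S_1$, we have $0 < m \le M < \infty$. For an arbitrary $\nu \neq 0$, set $\rho = N_1(\nu)$ and $\omega = \delta_{1/\rho}\nu \in S_1$; then $N_2(\nu) = N_2(\delta_\rho\omega) = \rho N_2(\omega) = N_1(\nu)\,N_2(\omega)$, whence $m\,N_1(\nu) \le N_2(\nu) \le M\,N_1(\nu)$. This also holds trivially at $\nu = 0$, so taking $k_1 = m$ and $k_2 = M$ yields the equivalence of Definition 2.3. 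The same argument applies verbatim if the two norms share any common degree of homogeneity $Q$, simply replacing $\rho$ by $\rho^Q$ throughout.
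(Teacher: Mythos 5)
Your proposal is correct, and its core strategy --- extremize one norm over the other's unit sphere, then use dilation homogeneity to propagate the bounds to all of $\mathbb{H}_q$ --- is exactly the strategy of the paper's proof. The genuine difference is in the compactness step, and it matters. The paper simply asserts that the spheres $S_1$, $S_2$ are compact ``since the norm is continuous and positive away from zero''; continuity and positivity give closedness in $\mathbb{H}_q\setminus\{0\}$ but not boundedness, so this assertion is an actual gap. Your polar decomposition through the Euclidean sphere $S^E$ --- showing $\rho \mapsto |\delta_\rho\sigma|_{\mathrm{Eucl}}$ is strictly increasing, hence $S_1 = \{\delta_{1/N_1(\sigma)}\sigma : \sigma \in S^E\}$ is the continuous image of a compact set --- is precisely the missing lemma, and it is the only place where continuity and vanishing-only-at-the-origin are jointly used, as you observe. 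Your writeup also repairs several slips in the paper's execution: the paper's auxiliary function $\varphi((u,t)) = |(u,t)|^{\delta_\rho} = \rho^Q|(u,t)|$ is not well defined as written; it takes a minimum on $S_1$ and a maximum on $S_2$ (two different spheres, which does not yield a two-sided bound on one sphere); it scales by $\delta_\rho$ with $\rho = |(u,t)|_{\mathbb{H}_{q_1}}$ where $\delta_{1/\rho}$ is needed to land on the unit sphere; and its key display $|(u,t)|_{\mathbb{H}_{q_2}} = |\delta_\rho(u,t)|_{\mathbb{H}_{q_2}} = \rho|(u,t)|_{\mathbb{H}_{q_2}}$ is self-contradictory unless $\rho = 1$. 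In short: the paper's proof is shorter but heuristic; yours is the same argument made airtight, at the modest cost of one extra lemma about the Euclidean cross-section.
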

\begin{proof}
The statement of the theorem implies that we seek constants $C_1, C_2> 0$ such that $\forall~\nu:=(u,t)\in \mathbb{H}_q$ we have $$C_1|\nu|_{\mathbb{H}_{q_1}}\le |\nu|_{\mathbb{H}_{q_2}}\le C_2|\nu|_{\mathbb{H}_{q_1}}.$$
Let $S_1$ and $S_2$ be unit spheres defined by $\disp S_1=\{(u,t)\in \mathbb{H}_q:|(u,t)|_{\mathbb{H}_{q_1}}=1\}$ and $\disp S_2=\{(u,t)\in \mathbb{H}_q:|(u,t)|_{\mathbb{H}_{q_2}}=1\}$.\\
The spheres so defined are compact in $\mathbb{H}_q\setminus \{(0,0)\}$ since $|(u,t)|_{\mathbb{H}_q}$ is continuous and positive away from zero. Now define $\disp \varphi: S_2\rightarrow [0,\infty)$ by $\disp \varphi\left((u,t)\right)=|(u,t)|^{\delta_\rho}=\rho^Q|(u,t)|,~~Q\ge 1$. Then by continuity property of the distance function $|(u,t)|_{\mathbb{H}_{q_2}}$ and compactness of $S_1$, $\varphi$ attains minimum on $S_1$ and maximum on $S_2$ denoted by $m$ and $M$ respectively. If we let $\rho:=|(u,t)|_{\mathbb{H}_{q_1}}$, we will have $\delta_\rho(u,t)\in S_1$, so that $\disp{|(u,t)|_{\mathbb{H}_{q_2}}=|\delta_\rho(u,t)|_{\mathbb{H}_{q_2}}=\rho|(u,t)|_{\mathbb{H}_{q_2}}}$\\
$\disp \implies~m|(u,t)|_{\mathbb{H}_{q_1}} \le |(u,t)|_{\mathbb{H}_{q_2}}\le M|(u,t)|_{\mathbb{H}_{q_1}};~~\forall~(u,t)\in \mathbb{H}\setminus (0,0)$.
\end{proof}
\begin{thm} The Box norm is a Euclidean-type norm on $\mathbb{H}_q$ and is nonhomogeneous with respect to the Quaternionic Heisenberg group dilation.\end{thm}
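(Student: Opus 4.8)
The plan is to treat the two assertions separately: first that the Box norm is genuinely a norm of Euclidean type on the underlying manifold, and then that it fails the homogeneity axiom (i) of the quasi-norm definition with respect to the anisotropic dilation $\delta_\rho$.

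For the first part, I would observe that under the identification $\mathbb{H}_q \approx \mathbb{R}^{4n} \times \mathbb{R}^3 = \mathbb{R}^{4n+3}$, the quaternionic modulus satisfies $|q|^2 = \sum_{i=1}^n |q_i|^2$, which is exactly the squared Euclidean norm of $q$ viewed as a vector in $\mathbb{R}^{4n}$, while $|t|^2 = \sum_{i=1}^3 t_i^2$ is the squared Euclidean norm of $t \in \mathbb{R}^3$. Consequently
\[
\|(q,t)\| = \sqrt{|q|^2 + |t|^2}
\]
coincides with the standard Euclidean norm on $\mathbb{R}^{4n+3}$. From this identification the norm axioms are immediate: positive definiteness and the vanishing condition follow because a sum of squares is zero only when every coordinate vanishes, symmetry $\|(q,t)^{-1}\| = \|(q,t)\|$ holds since $(q,t)^{-1}=(-q,-t)$ and squaring removes signs, and the ordinary triangle inequality is inherited verbatim from the Euclidean structure. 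This justifies the designation ``Euclidean-type.''

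For the second part, I would apply the group dilation $\delta_\rho(q,t) = (\sqrt{\rho}\, q,\, \rho t)$ directly and compute
\[
\|\delta_\rho(q,t)\|^2 = |\sqrt{\rho}\, q|^2 + |\rho t|^2 = \rho\,|q|^2 + \rho^2\,|t|^2.
\]
A homogeneous norm of some degree $\beta$ would have to satisfy $\|\delta_\rho(q,t)\| = \rho^{\beta}\|(q,t)\|$, equivalently $\rho\,|q|^2 + \rho^2\,|t|^2 = \rho^{2\beta}\big(|q|^2 + |t|^2\big)$ for all $\rho>0$ and all $(q,t)$. The decisive step is to show that no single exponent $\beta$ can make this hold.

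I would finish by testing the relation on two special elements. Taking $(q,0)$ with $q \neq 0$ gives $\|\delta_\rho(q,0)\| = \sqrt{\rho}\,|q| = \rho^{1/2}\|(q,0)\|$, forcing $\beta = 1/2$; taking $(0,t)$ with $t \neq 0$ gives $\|\delta_\rho(0,t)\| = \rho\,|t| = \rho^{1}\|(0,t)\|$, forcing $\beta = 1$. Since the horizontal layer scales like $\rho^{1/2}$ while the central layer scales like $\rho^{1}$, the two required exponents disagree, so no universal degree of homogeneity exists and axiom (i) fails; equivalently, the ratio $\|\delta_\rho(q,t)\|/\|(q,t)\|$ genuinely depends on $(q,t)$ and not on $\rho$ alone. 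The only real subtlety — the ``hard part,'' such as it is — lies precisely here: the anisotropy of $\delta_\rho$, which scales $q$ by $\sqrt{\rho}$ but $t$ by $\rho$, is incompatible with the isotropic Euclidean mixing of the two layers, and the cleanest way to expose this incompatibility is the two-point test rather than any manipulation of the full expression. This would complete the proof that the Box norm, though a bona fide Euclidean norm, is nonhomogeneous with respect to the group dilation.
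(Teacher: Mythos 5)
Your proof is correct, and its core is the same argument the paper itself uses: apply the group dilation to $(q,t)$, expand the Box norm, and observe that the two layers scale differently. However, you are more careful than the paper in two places, and the differences are worth recording. First, the paper's proof stops at the identity $\|\delta_\rho(q,t)\| = \rho\sqrt{|q|^2+\rho^2|t|^2}\ne \rho\|(q,t)\|$, which only rules out homogeneity of degree $1$; taken literally, that leaves open the possibility that the Box norm is homogeneous of some other degree $\beta$. Your two-point test --- restricting to $(q,0)$ forces $\beta=1/2$, restricting to $(0,t)$ forces $\beta=1$ --- rules out every exponent simultaneously, which is what ``nonhomogeneous'' actually requires; this closes a genuine (if small) gap in the paper's argument. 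Second, you compute with the dilation $\delta_\rho(q,t)=(\sqrt{\rho}\,q,\rho t)$ exactly as it is defined in the paper's introduction, whereas the paper's own proof silently switches to the normalization $(\rho q,\rho^2 t)$; the two conventions are reparametrizations of one another ($\rho\mapsto\rho^2$), so both computations are legitimate, but yours is the one consistent with the rest of the text. Finally, you also prove the first half of the statement --- that the Box norm is the standard Euclidean norm under the identification $\mathbb{H}_q\approx\mathbb{R}^{4n+3}$, with positive definiteness, symmetry under inversion, and the triangle inequality for vector addition inherited from that structure --- which the paper waves away with ``it suffices to show the non-homogeneity.'' In short: same method, but your version is complete where the paper's is only indicative.
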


\begin{proof}
It suffices to show the non-homogeneity of this norm. To do this, we see by definition that
 \begin{eqnarray*}
\|\delta_\rho(q,t)\|&=&\sqrt{|\rho q|^2+|\rho^2t|^2}\\
&=&\sqrt{\rho^2|q|^2+\rho^4|t|^2}\\
&=& \sqrt{\rho^2\left(|q|^2+\rho^2|t|^2\right)}\\
&=& \rho\sqrt{|q|^2+\rho^2|t|^2}\ne \rho \|(q,t)\|.
\end{eqnarray*}
Therefore, the Box norm on $\mathbb{H}_q$ is non-homogeneous on $\mathbb{H}_q$.
\end{proof}

\bibliographystyle{amsplain}

\providecommand{\bysame}{\leavevmode\hbox to3em{\hrulefill}\thinspace}

\end{document}